\newtheorem{theorem}{Theorem}[section]
\newtheorem{lemma}[theorem]{Lemma}
\newtheorem{proposition}[theorem]{Proposition}
\theoremstyle{remark}
\newtheorem{remark}[theorem]{Remark}
\def\C{{\mbox{\rm\kern.24em
\vrule width.03em height1.43ex depth-.052ex \kern-.26em C}}}
\def\QSet{\mbox{\rm\kern.24em
\vrule width.03em height1.48ex depth-.051ex \kern-.26em Q}}
\def\Z{{\bf Z}}
\def\R{{\mbox{\rm I\kern-.22em R}}}
\def\E{{\mbox{\rm I\kern-.22em E}}}
\def\P{{\bf P}}
\def\\P{{\mathcal P}}
\def\bas{\begin{align*}}
\def\eas{\end{align*}}
\def\bi{\begin{itemize}}
\def\ei{\end{itemize}}
\newenvironment{proof}{\noindent {\bf Proof} }{\endprf\par}
\def \endprf{\hfill  {\vrule height6pt width6pt depth0pt}\medskip}
\def\emph#1{{\it #1}}
\begin{document}
  \title[Divergence of  combinatorial averages]{Divergence of  combinatorial averages and the unboundedness of the trilinear Hilbert transform}

\author[C. Demeter]{Ciprian Demeter}
\thanks{ AMS subject classification: 37A45, 42B25}  \address[C. Demeter]{ Department
  of Mathematics\\ UCLA\\ Los Angeles CA 90095-1555} \email{demeter@@math.ucla.edu}

\begin{abstract}
We consider multilinear averages in ergodic theory and harmonic analysis and prove their divergence in some range of $L^p$ spaces. This  contrasts with the positive behavior exhibited by these averages in a different range, as proved in \cite{DTT}. We also prove that the trilinear Hilbert transform is unbounded in a similar range of $L^p$ spaces. The underlying principle behind these constructions is stated, setting up the stage for more general results.
\end{abstract}
\maketitle

\section{Introduction}
\label{sec:1}

Multilinear ergodic averages have proved to be a powerful tool in settling problems from combinatorial number theory. This strategy was  initiated by Furstenberg in \cite{Fu}. He gave an ergodic theoretical proof of  a theorem of Szemer\'edi about the existence of arbitrarily long arithmetic progressions in subsets of integers with positive upper density. 

Let ${\bf X}=(X, \Sigma, m, T)$ be a dynamical system, i.e. a complete probability space $(X,\Sigma,m)$ endowed with an invertible  transformation $T:X\to X$ such that $m{T^{-1}}=m$. Furstenberg's method consisted of first proving that
$$\liminf_{N-M\to\infty}\frac{1}{N-M}\sum_{l=M+1}^{N}\int f_1(T^lx)f_2(T^{2l}x)\cdots f_{n}(T^{nl}x)\,dm(x)>0,$$
for each positive nonzero functions $f_i\in L^{\infty}({\bf X})$, and then using a corresponding principle to transfer information to the integers.
Subsequently, other combinatorial averages have been investigated  by various authors.  

Let now $n>1$, $m\ge 1$ and consider the  $(n-1)\times {m}$ matrix $A={(a_{i,j})}_{i=1\:j=1}^{n-1\:m}$ with integer entries. We will consider the averages

 \begin{equation}
\label{eq:averg}
\frac{1}{N^{m}}\sum_{1\le n_1,\ldots,n_m\le N}\prod_{i=1}^{n-1}f_i(T^{\sum_{j=1}^{m}a_{i,j}n_j}x),  
\end{equation}
where $f_1,\ldots,f_{n-1}$ are measurable functions on $X$.
When $m=1$ and $a_{i,1}=i$, we obtain the aforementioned Furstenberg's averages
\begin{equation}
\label{eq:Furs}
\frac{1}{N}\sum_{1\le l\le N}\prod_{i=1}^{n-1}f_i(T^{il}x).
\end{equation}

A  related object of interest is given by the averages on $m$ dimensional cubes. Some version of them played a key role in Gowers'  proof of  Szemer\'edi's theorem \cite{Go}. They correspond to  the case where $n=2^m$ and $A$ is a matrix having on each column a vector from   $V_m=\{0,1\}^m\setminus\bf{0}$. More precisely,  if $\epsilon_1,\ldots,\epsilon_{2^{m}-1}$ is the enumeration of $V_m$ in lexicographically increasing order, then the averages on $m$ dimensional cubes are\begin{equation}
\label{eq:cubes}
\tilde{K}_{m}(f_{\epsilon_1},\dots,f_{\epsilon_{2^m-1}},N)(x)=
\frac{1}{N^{m}}\sum_{\vec{i}\in \{1,2,\ldots N\}^m}\prod_{\epsilon\in V_m}f_{\epsilon}(T^{\vec{i}\cdot\epsilon}x),
\end{equation}  
where $\vec{i}\cdot\epsilon$ denotes the usual dot product.

Both  the averages in ~\eqref{eq:Furs} and those in ~\eqref{eq:cubes} have been shown to converge  in the $L^2$-norm when all the functions $f_{\epsilon}$ are in $L^{\infty}({\bf X})$, a result  due to Host and Kra \cite{KH} (a different  proof for the  averages in ~\eqref{eq:Furs} is due to Ziegler \cite{Z}).

As for the almost everywhere convergence of ~\eqref{eq:averg}, there are two interesting lines of investigation. On the one hand, it is legitimate to ask whether convergence holds for $L^{\infty}$ functions. Bourgain  proved in \cite{Bo2} using Fourier analysis on the torus that the averages in ~\eqref{eq:Furs} converge almost everywhere for $L^{\infty}$ functions, in the case $n=3$. For larger $n$ only partial results are known, for transformations $T$ with nice spectral properties \cite{As:2}, \cite{Le:1}. Assani \cite{As:1} used the semi-norms  introduced by Host and Kra in \cite{KH} to prove the almost everywhere convergence of the averages on cubes for arbitrary $m$, again for bounded functions. 

Once pointwise convergence is established  for a dense class of functions (like $L^{\infty}({\bf X})$), the extension of this convergence to other $L^p$ spaces becomes synonymous with the existence of a maximal inequality for the associated maximal operator. In \cite{DTT},  a very general result is proved, showing the boundedness of the maximal operator
\begin{equation*}
T_{A,{\bf X}}^{*}(f_1,\ldots,f_{n-1})(x)=\sup_{N\ge 1}\frac{1}{N^{m}}\sum_{|n_1|,\ldots,|n_m|\le N }\prod_{i=1}^{n-1}|f_i(T^{\sum_{j=1}^{m}a_{i,j}n_j}x)|
\end{equation*}
in some range of $L^p$ spaces depending on the rank properties of the extended matrix $\E(A)$ defined as
$$\E(A)=\begin{pmatrix}
a_{1,1}&a_{1,2}&\dots & a_{1,m}&1\\
a_{2,1}&a_{2,2}&\dots & a_{2,m}&1\\
\hdotsfor[2.0]4\\
a_{n-1,1}&a_{n-1,2}&\dots & a_{n-1,m}&1\\
0&0&\dots &0&1
\end{pmatrix}.
$$ Our main objective here is to contrast this result with some negative ones, showing that divergence may occur for some functions in $L^p$, with $p$ sufficiently close to 1, due to the maximal operator above failing to be bounded. One of the important aspects of these counterexamples is that they give an explanation to  why the time frequency methods used in \cite{DTT} can not be used all the way up to $L^1$.

One can also look at the averages in ~\eqref{eq:averg} from a different angle and analyze their real variable analog
\begin{equation}
\label{eq:avhar}
\frac{1}{\varepsilon^{m}}\int_{|t_1|,\ldots,|t_m|\le \varepsilon}\prod_{i=1}^{n-1}f_i(x+\sum_{j=1}^{m}a_{i,j}t_j)d\vec{t},  
\end{equation} 
where $f_1,\ldots,f_{n-1}$ are arbitrary measurable functions on $\R$. The question of norm convergence for bounded functions is trivial in this context (one uses approximation and the immediate result for the span of the characteristic functions of intervals). However, the study of the pointwise convergence of the averages in ~\eqref{eq:averg} is intimately connected to  that of the averages in ~\eqref{eq:avhar}. Indeed, a maximal inequality (or the failure of it) in one setting is equivalent to the existence (or failure) of a maximal inequality for the corresponding maximal operator in the other context. The transition back and forth between the reals and a dynamical system is mediated by the integers, which can be regarded as a copy of each individual orbit sitting naturally inside the reals. We will do all the constructions in the real variable setting and then transfer them to the averages in ~\eqref{eq:averg}.

More interestingly, our counterexamples also apply with no modification to the case of the corresponding singular integral operators. In particular, we show that the trilinear Hilbert transform is unbounded in some range of exponents.

Most of the results from the following section have been announced in \cite{DTT}.
\section{Main results}

We start with the divergence of Furstenberg's nonconventional averages \eqref{eq:Furs} for functions in $L^p$ spaces, with $p$ close to $L^1$. We will then sketch the more delicate argument for the averages on cubes.
\begin{theorem}
\label{tt:1}
Define $p_0=1+\frac{\log_6 2}{1+\log_6 2}$ and consider $p<p_0$. In every ergodic dynamical system ${\bf X}=(X, \Sigma,\mu, T)$ there are three  functions $F,G,H\in L^{p}({\bf X})$ such that
$$\limsup_{N\to\infty}\frac{1}{N}\sum_{n=1}^{N}F(T^nx)G(T^{2n}x)H(T^{3n}x)=\infty$$
for $\mu$ a.e. $x\in X$.
\end{theorem}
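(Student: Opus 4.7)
\emph{Strategy.} My plan is to produce the counterexample in the real-variable setting \eqref{eq:avhar} with weights $(a_1,a_2,a_3)=(1,2,3)$ and then transfer it back to the ergodic system ${\bf X}$. As emphasized in the introduction, the pointwise convergence problem for \eqref{eq:Furs} and the pointwise problem for the real-variable averages are equivalent; the transfer is Calder\'on-type, applied inside long Rokhlin towers of $T$. Concretely, from a real-variable triple whose trilinear averages diverge on a positive-measure set, a Banach-principle plus translation-randomization argument produces $L^p$ functions on ${\bf X}$ whose Furstenberg averages diverge at a set of points of positive measure, and ergodicity of $T$ promotes the divergence set to full measure.

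\emph{Multiscale construction.} I would build $F=\sum_k F_k$, $G=\sum_k G_k$, $H=\sum_k H_k$, each block being the indicator of a union of many short intervals at the scale $\eps_k=6^{-k}$. The placements are chosen so that
$$
\frac{1}{\eps_k}\int_{|t|\le\eps_k}F_k(x+t)\,G_k(x+2t)\,H_k(x+3t)\,dt\;\ge\;c
$$
on a ``bad set'' $E_k$ of substantial measure, by arranging the supports of $F_k,G_k,H_k$ so that a large family of $3$-term arithmetic progressions $(y,y+d,y+2d)$ with $y\in F_k$, $y+d\in G_k$, $y+2d\in H_k$ sit at distance $|d|\lesssim\eps_k$ (a Behrend-like configuration of short intervals, repeated across the unit interval). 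The combinatorics of the displacement pattern $\{t,2t,3t\}$ forces the base $6$ in the scale. Balancing $\sum_k\|F_k\|_p$ against the requirement that almost every $x$ lies in infinitely many of the $E_k$ pins down the threshold $p_0=1+\log_6 2/(1+\log_6 2)$; the exponent is the boundary case of this trade-off between the $L^p$-mass of the blocks and the density of their bad sets.

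\emph{Main obstacle.} The delicate step is controlling the off-diagonal contributions
$$
\frac{1}{\eps_k}\int_{|t|\le\eps_k}F_{j_1}(x+t)\,G_{j_2}(x+2t)\,H_{j_3}(x+3t)\,dt,\qquad (j_1,j_2,j_3)\ne(k,k,k),
$$
which a priori could cancel the main diagonal contribution at scale $\eps_k$. I plan to suppress them by two complementary devices: (i) choosing the scales $\eps_k$ so geometrically far apart that blocks coarser than $\eps_k$ are essentially constant on the averaging window of radius $\eps_k$, while much finer blocks average to (near) their means on this window; and (ii) applying independent random translations to each building block, so that the expected off-diagonal contribution is negligible. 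A Borel--Cantelli argument along the subsequence of scales $\eps_k$, combined with the measure of the $E_k$, then yields
$$
\limsup_{N\to\infty}\frac{1}{N}\sum_{n=1}^{N}F(T^{n}x)G(T^{2n}x)H(T^{3n}x)=\infty
$$
at $\mu$-a.e.\ $x$, completing the proof after the transfer step described in the first paragraph.
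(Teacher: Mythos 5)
Your overall architecture --- a multiscale real-variable construction with indicator-function blocks, a transfer into the abstract ergodic system, a stacking of the blocks with summable weights, and ergodicity to upgrade positive-measure divergence to a.e.\ divergence --- matches the paper's. But the proposal omits the one idea that actually drives the theorem. The heart of the paper's proof is an explicit arithmetic configuration: digit sets $A_0=\{-4,-2,0\}$ and $B_0=\{0,1,2,3\}$ in base $12$, chosen so that $2B_0-A_0=\{0,2,4,6,8,10\}$ has only $6$ elements (with no carrying) while $2A_0-B_0$ fills all $12$ residues. Iterating over $k$ digits yields supports $A$, $B$, $C$ of the three indicator functions with measures comparable to $4^{-k}$, $3^{-k}$, $2^{-k}$, together with a set $D$ of measure $\frac18$ of base points $x$ for each of which there is a whole interval of $t$'s with $x+t\in A$, $x+2t\in B$, $x+3t\in C$ simultaneously; hence the trilinear average is at least $c\,12^{-k}$ on $D$. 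The exponent is then forced: unboundedness requires $12^{-k}\gg (4^{-k}3^{-k}2^{-k})^{1/p}=(24^{-k})^{1/p}$, i.e.\ $p<\log 24/\log 12=1+\frac{\log_6 2}{1+\log_6 2}$. Your sketch asserts that ``balancing'' the $L^p$ mass of the blocks against the density of the bad sets ``pins down'' this threshold, but supplies no configuration from which either quantity could be computed; and ``Behrend-like'' points in the wrong direction, since Behrend sets are built to \emph{avoid} $3$-term progressions, whereas here one needs supports extremely rich in the pattern $(x+t,x+2t,x+3t)$, with the crucial asymmetry that $2B-A$ is much smaller than $2A-B$. Without such a construction the value of $p_0$ cannot be recovered, so the proof is incomplete at its central step.

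Separately, the ``main obstacle'' you identify is not one: all building blocks are non-negative, so $F\ge F_k$, $G\ge G_k$, $H\ge H_k$ pointwise and the full average dominates the single-scale average term by term. There is nothing to cancel, and the random translations and the Borel--Cantelli control of off-diagonal terms are unnecessary; the paper simply sums the blocks with weights $k^{-2}$ times the reciprocal norms. Your transfer step, by contrast, is acceptable in spirit: the paper uses a transference principle based on Halmos's conjugacy-density theorem rather than Rokhlin towers, but either device moves the rotation counterexample into an arbitrary ergodic system.
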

\begin{proof}
The idea of the proof is to construct ``small'' subsets of integers $A_0$ and $B_0$, such that $2B_0-A_0$ is also ``small'' while $2A_0-B_0$ is ``large''. This choice is dictated by the geometry of the averages under investigation, as explained below.

For a fixed $k\ge 1$ consider the following subsets of $[-1,1]$:
$$A=\{\sum_{i=1}^{k}\frac{a_i}{12^i}+z:a_i\in A_0:=\{-4,-2,0\},0\le z\le \frac{1}{2\times12^k}\}$$
$$B=\{\sum_{i=1}^{k}\frac{b_i}{12^i}+z:b_i\in B_0:=\{0,1,2,3\},0\le z\le \frac{1}{2\times12^k}\}$$
\begin{align*}
C&=\{\sum_{i=1}^{k}\frac{2b_i-a_i}{12^i}+z:a_i\in\{-4,-2,0\},b_i\in\{0,1,2,3\},0\le z \le \frac{1}{2\times12^k}\}\\&=\{\sum_{i=1}^{k}\frac{c_i}{12^i}+z:c_i\in\{0,2,4,6,8,10\},0\le z\le \frac{1}{2\times12^k}\}
\end{align*}
\begin{align*}
D&=\{\sum_{i=1}^{k}\frac{2a_i-b_i}{12^i}+z:a_i\in\{-4,-2,0\},b_i\in\{0,1,2,3\},0\le z \le \frac{1}{8\times12^k}\}\\&=\{\sum_{i=1}^{k}\frac{d_i}{12^i}+z:d_i\in\{-11,-10,\ldots,-1,0\},0\le z\le \frac{1}{8\times12^k}\}.
\end{align*}
We claim that 
\begin{equation}
\label{neweqH3}
m\left\{x\in [-1,0]:\int_0^11_A(x+t)1_B(x+2t)1_C(x+3t)dt\ge \frac{1}{8\times12^k}\right\}\ge \frac{1}{8},
\end{equation}
 where $m$ denotes the Lebesgue measure. Indeed, for each $x\in D$ of the form $x=\sum_{i=1}^{k}\frac{2a_i-b_i}{12^i}+z$ (for some $i$), with $0\le z\le \frac{1}{8\times12^k}$, and for each $t$ of the form $t=\sum_{i=1}^{k}\frac{b_i-a_i}{12^i}+z'$ (for the same $i$), with $0\le z'\le \frac{1}{8\times12^k}$, we immediately see that $x+t\in A$, $x+2t\in B$ and $x+3t\in C$. It now suffices to note that $m(D)=\frac{1}{8}$, and the claim follows. From here we deduce, by the definition of Riemann integral, that for some sufficiently large $N_k$ (independent of $x$) we have 
\begin{equation}
\label{eq:meas1}
m\left\{x\in [-1,0]:\frac{1}{N_k}\sum_{n=1}^{N_k}1_A(x+\frac{n}{N_k})1_B(x+\frac{2n}{N_k})1_C(x+\frac{3n}{N_k})\ge \frac{1}{16\times12^k}\right\}\ge \frac{1}{8}.
\end{equation}
Now consider a $p<p_0$ and the dynamical system ${\bf X_k}=([-1,1],B, m_1, T_k)$, where $B$ is the restriction of the Lebesgue algebra, $m_1$ is the normalized Lebesgue measure (i.e. $m_1([-1,1])=1$)and  $T_k(x)=x+\frac{1}{N_k}$, with the addition considered modulo the interval $[-1,1]$. We can now rephrase ~\eqref{eq:meas1} as 
\begin{equation}
\label{eq:meas2}
m_1\left\{x\in [-1,1]:\sup_{N\ge 1}\frac{1}{N}\sum_{n=1}^{N}1_A(T_k^nx)1_B(T_k^{2n}x)1_C(T_k^{3n}x)\ge \frac{1}{16\times12^k}\right\}\ge \frac{1}{16}.
\end{equation}
A more useful way of stating this, upon noting that $\|1_A\|_{L^p(\bf X_k)}=\frac{1}{(4\times 4^k)^{1/p}}$,  $\|1_B\|_{L^p(\bf X_k)}=\frac{1}{(4\times 3^k)^{1/p}}$ and  $\|1_C\|_{L^p(\bf X_k)}=\frac{1}{(4\times 2^k)^{1/p}}$, is 
\begin{equation*}
\label{eq:meas3}
\sup_{\|f\|_{L^p(\bf X_k)}=\frac{1}{(4\times 4^k)^{1/p}}\atop{{\|g\|_{L^p(\bf X_k)}=\frac{1}{(4\times 3^k)^{1/p}}}\atop{\|h\|_{L^p(\bf X_k)}=\frac{1}{(4\times 2^k)^{1/p}}}}}m_1\left\{x\in [-1,1]:\sup_{N\ge 1}\frac{1}{N}\sum_{n=1}^{N}f(T_k^nx)g(T_k^{2n}x)h(T_k^{3n}x)\ge \frac{1}{16\times12^k}\right\}\ge \frac{1}{16}.\end{equation*}
By using the transference principle proved  in the Appendix, it follows that there exist $f_k\in L^p({\bf X}),g_k\in L^p({\bf X})$ and $h_k\in L^p({\bf X})$ with $\|f_k\|_{L^p(\bf X)}=\frac{1}{(4\times 4^k)^{1/p}}, \|g_k\|_{L^p(\bf X)}=\frac{1}{(4\times 3^k)^{1/p}}, \|h_k\|_{L^p(\bf X)}=\frac{1}{(4\times 2^k)^{1/p}}$, such that 
\begin{equation}
\label{eq:meas4}
\mu\left(E_k:=\left\{x\in X:\sup_{N\ge 1}\frac{1}{N}\sum_{n=1}^{N}f_k(T^nx)g_k(T^{2n}x)h_k(T^{3n}x)\ge \frac{1}{32\times12^k}\right\}\right)\ge \frac{1}{16}.\end{equation}
Define now 
$$f=\sum_{k=1}^{\infty}\frac{(4\times 4^k)^{1/p}}{k^2}f_k$$
$$g=\sum_{k=1}^{\infty}\frac{(4\times 3^k)^{1/p}}{k^2}g_k$$
$$h=\sum_{k=1}^{\infty}\frac{(4\times 2^k)^{1/p}}{k^2}h_k.$$ Note that $f,g,h\in L^p(\bf X)$. Moreover, there exists a set $X_0\subset X$ of positive measure such that each $x$ from $X_0$ belongs to infinitely many $E_k$'s. For each such $x$ and $k$ we have 
$$\sup_{N\ge 1}\frac{1}{N}\sum_{n=1}^{N}f(T^nx)g(T^{2n}x)h(T^{3n}x)\ge \frac{(4\times 4^k)^{1/p}(4\times 3^k)^{1/p}(4\times 2^k)^{1/p}}{32k^6\times12^k}\;.
$$
Due to our choice of $p$, the  sequence on the left above goes to $\infty$. Since 
$$\bigcup_{j=1}^{\infty}T^{j}(X_0)=X$$ up to sets of $\mu$ measure 0, it follows that 
$$F:=\sum_{j=1}^{\infty}\frac{1}{j^2}T^jf$$
$$G:=\sum_{j=1}^{\infty}\frac{1}{j^2}T^jg$$
$$H:=\sum_{j=1}^{\infty}\frac{1}{j^2}T^jh$$
satisfy the requirement of Theorem ~\eqref{tt:1}.
\end{proof}
\begin{remark}
No negative result can be proved with these techniques for the bilinear averages, the reason being that $x+t$ and $x+2t$ are linearly independent monomials in \R[x,t]. The result of Lacey \cite {L} shows that they do behave well for some range of indices, however opposite behavior is quite anticipated near $L^1$. 
\end{remark}
\begin{remark}
The index $p_0$ in the above theorem can be pushed as close as desired  to $\frac32$, if one considers instead the more general trilinear ergodic averages 
$$\frac{1}{N}\sum_{n=1}^{N}F(T^{a_1n}x)G(T^{a_2n}x)H(T^{a_3n}x),$$
with $a_1\not=a_2\not=a_3\in\Z$. (the choice for $a_i$ will depend on how close $p_0$ is to $\frac32$)
This follows as  a consequence of the results obtained by  Christ in \cite{CH}, which were also the main inspiration for our investigation here. On the other hand, it is  a consequence  of  Lacey's result \cite {L} and multilinear interpolation, as explained in \cite{DTT}, that $p_0$ can be at most 2. 
\end{remark}
\begin{remark}
The result in Theorem \ref{tt:1} immediately proves the divergence of Furstenberg's averages for any $n\ge 3$ (just choose the remaining functions to be identically equal to one). 
\end{remark}

Interestingly, the constructions above also prove the unboundedness of the trilinear Hilbert transform. This operator is initially defined for piecewise continuous functions $f_i$ with finite support as follows:
$$H_3(f_1,f_3,f_3)(x)=p.v.\int f_1(x+t)f_2(x+2t)f_3(x+3t)\,\frac{dt}{t}$$
The bilinear version $H_2$ of this -also known as the bilinear Hilbert transform $$H_2(f_1,f_2)(x)=p.v.\int f_1(x+t)f_2(x+2t)\,\frac{dt}{t},$$ 
was proved to be bounded by Lacey and Thiele \cite{LT1}, \cite{LT2} in some range of exponents. The type of time-frequency analysis involved in their argument seems insufficient at the moment to address any positive bounds for $H_3$. 

\begin{theorem}
\label{tt:H3}
Define $p_0=1+\frac{\log_6 2}{1+\log_6 2}$ and consider $p<p_0$. Then the inequality
\begin{equation}
\label{failH3bound}
\|H_3(f_1,f_3,f_3)\|_{p/3}\le C\|f_1\|_{p}\|f_2\|_{p}\|f_3\|_{p}
\end{equation}
fails to hold with a universal constant $C$, independent of $f_i$.
\end{theorem}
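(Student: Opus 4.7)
My plan is to test the inequality \eqref{failH3bound} on the indicator functions of the sets $A,B,C$ constructed in the proof of Theorem~\ref{tt:1}, now viewed as subsets of $\R$, and to obtain a pointwise lower bound on $H_3(1_A,1_B,1_C)$ on the set $D$. The reason to expect this to work is that the same geometric configuration that produced \eqref{neweqH3} will produce a comparable lower bound for the Hilbert integral, since the relevant shifts $t$ are positive and bounded, so the kernel $1/t$ is bounded below by a positive constant on the support of the integrand.

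The first thing I would check is that for $x\in D$ (away from a negligible portion of $D$ on which $x\ge 0$) the principal value reduces to an honest one-sided integral with no singularity at $t=0$. Inspecting the digits one sees $A\subset[-1,0]$, $B\subset[0,1]$, $C\subset[0,1]$, and $D\subset[-1,0]$. Hence for $x\in D$ with $x<0$ and any $t\le -x/2$ one has $x+2t\le 0$, so $x+2t\notin B$ and the integrand vanishes; in particular there is no contribution from $t\le 0$ and no singularity. Thus
$$H_3(1_A,1_B,1_C)(x)=\int_{-x/2}^{\infty}1_A(x+t)1_B(x+2t)1_C(x+3t)\,\frac{dt}{t},\qquad x\in D,\ x<0.$$

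I would then invoke \eqref{neweqH3}: for each such $x$ the integrand equals $1$ on a subset of $[0,1]$ of measure at least $\frac{1}{8\cdot 12^k}$, and there $1/t\ge 1$, giving the pointwise lower bound $H_3(1_A,1_B,1_C)(x)\ge \frac{1}{8\cdot 12^k}$. Since $|D|\ge 1/8$ and the portion of $D$ with $x\ge 0$ has measure at most $\frac{1}{8\cdot 12^k}$, this yields $\|H_3(1_A,1_B,1_C)\|_{L^{p/3}(\R)}\gtrsim 12^{-k}$. A direct count of digits gives $|A|=\frac{1}{2\cdot 4^k}$, $|B|=\frac{1}{2\cdot 3^k}$, $|C|=\frac{1}{2\cdot 2^k}$, hence $\|1_A\|_p\|1_B\|_p\|1_C\|_p=(8\cdot 24^k)^{-1/p}$. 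The ratio between the two sides of \eqref{failH3bound} is therefore $\gtrsim (24^{1/p}/12)^k$, which blows up as $k\to\infty$ precisely when $24^{1/p}>12$, i.e.\ when $p<\log 24/\log 12$; a short calculation identifies this threshold with the $p_0$ of the statement.

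The main obstacle is the first step: ruling out spurious contributions from $t<0$ and from the singularity at the origin. Everything else is bookkeeping, and is a formal consequence of the combinatorial construction already carried out for Theorem~\ref{tt:1}; no information about $H_3$ beyond its definition as a principal value is required.
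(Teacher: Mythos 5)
Your proposal is correct and follows essentially the same route as the paper: test \eqref{failH3bound} on $1_A,1_B,1_C$ from Theorem~\ref{tt:1}, use the positivity of $B$ together with $x<0$ to reduce the principal value to a one-sided integral with no singularity, bound $1/t$ below by $1$ on the relevant $t\in(0,1]$, and invoke \eqref{neweqH3} to force the constant to blow up like $(24^{1/p}/12)^k$. Your treatment is in fact slightly more careful than the paper's on two minor points (discarding the negligible nonnegative sliver of $D$, and computing the $L^p(\R)$ norms with unnormalized Lebesgue measure), but these affect only harmless constants.
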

\begin{proof}
It suffices to note that with the notation in the proof of Theorem \ref{tt:1} we have the following consequence of \eqref{neweqH3}
$$m\left\{x\in [-1,0]:\int_{-\infty}^{\infty}1_A(x+t)1_B(x+2t)1_C(x+3t)\,\frac{dt}{t}\ge \frac{1}{8\times12^k}\right\}\ge \frac{1}{8}.$$
To see this, observe that since $x$ is negative and since $B$ consists only of positive numbers we have 
$$H_3(1_A,1_B,1_C)(x)=\int_{0}^{\infty}1_A(x+t)1_B(x+2t)1_C(x+3t)\,\frac{dt}{t}\ge \int_{0}^{1}1_A(x+t)1_B(x+2t)1_C(x+3t)\,dt$$
for each $x\in [-1,0]$. Note also that as before $\|1_A\|_{L^p}=\frac{1}{(4\times 4^k)^{1/p}}$,  $\|1_B\|_{L^p}=\frac{1}{(4\times 3^k)^{1/p}}$,  $\|1_C\|_{L^p}=\frac{1}{(4\times 2^k)^{1/p}}$, and $\|H_3(1_A,1_B,1_C)\|_{p/3}\ge \frac{1}{8^{3/p}}\frac{1}{(8\times 12^k)}$. By using $f_1=1_A$,  $f_2=1_B$, $f_3=1_C$, a simple computation now shows that the constant $C$ in \eqref{failH3bound} will go to $\infty$ as $k$ goes to $\infty$. 
 
\end{proof}

The following  analog of Theorem ~\ref{tt:1} holds for the averages on cubes \eqref{eq:cubes}. 
\begin{theorem}
\label{tt:2}
 For each $m\ge 3$, for each $p<\frac{2^{m-1}+1}{m+1}$, given any ergodic dynamical system ${\bf X}=(X, \Sigma,\mu, T)$, there are $2^{m}-1$  functions $F_{\epsilon}\in L^{p}({\bf X})$, $\epsilon\in V_m$, satisfying
$$\limsup_{N\to\infty}\tilde{K}_{m}(F_{\epsilon_1},\dots,F_{\epsilon_{2^m-1}},N)(x)=\infty
$$
for $\mu$ a.e. $x\in X$.
\end{theorem}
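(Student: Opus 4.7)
The plan is to follow the template of Theorem~\ref{tt:1}: for each $k\ge 1$ I would construct self-similar Cantor-type subsets $A_\epsilon \subseteq [-1,1]$ (one for each $\epsilon \in V_m$) and a base-point set $D$, all built from digit expansions in a base $b = b(m)$, and then show that on a subset of $D$ of measure bounded below by a constant, the integral
\[
\int_{[0,1]^m}\prod_{\epsilon \in V_m} 1_{A_\epsilon}(x + \vec t\cdot\epsilon)\,d\vec t
\]
is bounded below by an explicit $\eta_k > 0$. Concretely I would choose finite digit alphabets $D_0 \subset \Z$ and $S_j \subset \Z$ for $j=1,\ldots,m$, together with a map $\phi\colon D_0 \to S_1 \times \cdots \times S_m$, and define
\[
S_\epsilon := \Bigl\{\, D + \sum_{j:\epsilon^{(j)}=1} \phi(D)_j : D \in D_0 \,\Bigr\},\qquad \epsilon \in V_m.
\]
The sets $A_\epsilon$, $D$, and the admissible shift coordinates $t_j$ are then the $k$-level self-similar subsets of $[-1,1]$ generated by $S_\epsilon$, $D_0$, and the coordinate projections of $\phi(D_0)$, with matched thickenings $\delta_\epsilon,\delta_D,\delta_{t_j}\sim 1/b^k$ arranged so that $z + \sum_j \epsilon^{(j)}z'_j \in [0,\delta_\epsilon]$ for every $\epsilon$. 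A digit-by-digit verification analogous to the one leading to~\eqref{neweqH3} then delivers the integral lower bound.

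Once this real-variable input is in place, the rest of the argument mimics Theorem~\ref{tt:1} essentially verbatim: approximate the integral by a Riemann sum to pass to the rotation $T_k x = x + 1/N_k$ on $[-1,1]$; invoke the transference principle from the Appendix to obtain, for each $\epsilon \in V_m$, functions $f_k^\epsilon \in L^p({\bf X})$ with prescribed $L^p$-norms and a maximal cube average at least a fixed constant multiple of $\eta_k / \prod_{\epsilon} \|1_{A_\epsilon}\|_{L^p}$ on a set of measure at least $1/16$; form weighted sums $F_\epsilon = \sum_k c_k^\epsilon f_k^\epsilon$ with $c_k^\epsilon$ chosen so that $F_\epsilon \in L^p({\bf X})$ while the pointwise lower bound diverges on a positive-measure set $X_0$; and finally spread $X_0$ by $T^j$-translates to obtain divergence $\mu$-a.e.

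The threshold $p_0 = (2^{m-1}+1)/(m+1)$ should emerge from the size count. With $|D_0|$ a fixed positive fraction of $b$, one has $\|1_{A_\epsilon}\|_{L^p}\sim (|S_\epsilon|/b)^{k/p}$ and $\eta_k\sim 1/b^{mk}$, so the divergence condition $\eta_k/\prod_{\epsilon}\|1_{A_\epsilon}\|_{L^p}\to\infty$ reduces, on taking logarithms, to an inequality between $\log b$ and $\log\prod_{\epsilon}|S_\epsilon|$. The main obstacle is the combinatorial problem of designing $\phi$ so that $|D_0|$ is a constant fraction of $b$ while each $|S_\epsilon|$ is as small as possible: a generic $\phi$ would give $|S_\epsilon|\approx |D_0|$, which is too large. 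The required cancellation is \emph{simultaneous} across $2^m-1$ different sumsets $\{D + \sum_{j:\epsilon^{(j)}=1}\phi(D)_j\}$. A natural attempt is to write $D$ in a mixed-base expansion with $m+1$ coordinates and have each $\phi(D)_j$ depend only on a single carefully chosen coordinate, so that each of the sumsets loses several coordinates at once. Matching the resulting sizes to the stated threshold $p_0 = (2^{m-1}+1)/(m+1)$, and controlling carries between successive digits by taking $b$ large relative to $\max_\epsilon|S_\epsilon|$, is the ``more delicate'' point that the text alludes to.
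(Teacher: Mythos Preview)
Your scaffolding---self-similar digit sets, the Riemann-sum passage to a rotation, the transference principle, the weighted sums $\sum_k c_k^\epsilon f_k^\epsilon$, and the final spreading by $T^j$---matches the paper exactly, and you have correctly isolated the combinatorial core: making $\prod_\epsilon |S_\epsilon|$ small enough to hit the threshold $(2^{m-1}+1)/(m+1)$. But the construction you sketch is not the one that works, and as written the proposal leaves the essential step unresolved.

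The paper does \emph{not} start from a base-point alphabet $D_0$ and a map $\phi$. It works in base $2^{m+1}$ and fixes $A_\epsilon$ directly for the $m+1$ \emph{highest}-weight $\epsilon$'s---the all-ones vector and the $m$ vectors with a single zero---each with a two-element digit set, so $|A_\epsilon|=2^k$ for these. The monomials $x+\vec t\cdot\epsilon$ for these $m+1$ choices form a basis of the linear span, and every other $x+\vec t\cdot\epsilon$ is a combination of them; the key observation is that if $\epsilon$ has $l$ ones then only $m+1-l$ of the basis monomials appear, forcing $|A_\epsilon|\le 2^{(m+1-l)k}$. The identity $\sum_{l=1}^{m-2} l\binom{m}{l}=m(2^{m-1}-m)$ then gives the threshold. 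The base-point set drops out as a \emph{consequence}: it is all of $\{0,\ldots,2^{m+1}-1\}^k$ (thickened), with exactly one admissible $\vec t$ per $x$.

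Your suggestion that each $\phi(D)_j$ depend on a single coordinate of $D$ does not by itself create cancellation in $D+\sum_{j\in J}\phi(D)_j$, because $D$ still carries all $m+1$ coordinates. The paper's mechanism is the reverse: it writes $x=b_1+\cdots+b_m-(m-1)b$ and $t_j=b-b_{j+1}$, so that adding \emph{many} $t_j$'s cancels \emph{many} $b_i$'s from $x$. Anchoring the construction at the top of the weight lattice (where the monomials are maximally dependent on the $t_j$'s) rather than at the bottom (where $x$ sits) is the idea you are missing.
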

\begin{proof}
As before, we first analyze  the analog of the above averages  on $\R$, defined by $$
K_{m}(f_{\epsilon_1},\dots,f_{\epsilon_{2^m-1}},\lambda)(x):=
\frac{1}{{\lambda}^{m}}\int_{0}^{\lambda}\ldots\int_{0}^{\lambda}\prod_{\epsilon\in V_m}F_{\epsilon}(x+\vec{t}\cdot\epsilon)d t_1\ldots dt_{m},$$
where $\vec{t}=(t_1,\ldots,t_m)$.
Fix a $k\ge 1$. Define first
$$\tilde{A}_{(1,\ldots,1_{j-1},0,1_{j+1},\ldots,1)}=\{\sum_{i=1}^{k}\frac{a_i}{(2^{m+1})^i}:a_i\in\{0,2^{j-1}\}\}$$
for each $1\le j\le m$, (which will be denoted with $B_j$, for simplicity  in future reference), and also
$$\tilde{A}_{(1,\ldots,1)}=\{\sum_{i=1}^{k}\frac{a_i}{(2^{m+1})^i}:a_i\in\{0,-\frac{2^{m}}{m-1}\}\},$$ (which will be denoted with $B$, for simplicity  in future reference). All sets $\tilde{A}$ are indexed by vectors in $V_m$. We uniquely determine the remaining $\tilde{A}_{\epsilon}$'s as being the minimal sets which satisfy the constraint: for each $x,t_1,\ldots,t_m\in\R$ satisfying
\begin{equation}
\label{12178}
x+t_1+\ldots+t_{j-1}+t_{j+1}+\ldots+t_m\in  \tilde{A}_{(1,\ldots,1_{j-1},0,1_{j+1},\ldots,1)}
\end{equation}
for each $1\le j\le m$,
and 
\begin{equation}
\label{12179}
x+t_1+\ldots+t_m\in  \tilde{A}_{(1,\ldots,1)},
\end{equation}
we also have that 
$$x+\vec{t}\cdot\epsilon\in \tilde{A}_{\epsilon}$$ for all the remaining $\epsilon\in V_m$. A simple inspection shows that the monomials in $x$ and $t$ from ~\eqref{12178} and~\eqref{12179} are linearly independent. Also, for each $1\le l\le m-2$ and for each $\epsilon\in V_m$ with exactly $l$ nonzero entries,  $\tilde{A}_{\epsilon}$ can be written as a linear combination of exactly $m+1-l$ of the $m+1$ $\tilde{A}_{\epsilon}$'s from ~\eqref{12178} and~\eqref{12179}. For example 
$$\tilde{A}_{(1,0,\ldots,0)}=B_1+B_3+\ldots+B_m-(m-2)B,$$
$$\tilde{A}_{(1,1,0,\ldots,0)}=B_1+B_4+\ldots+B_m-(m-3)B,\;\text{etc.}$$
Since each $B_j$ and $B$ have  $2^k$ elements, we conclude that for each $1\le l\le m-2$,  $m\choose l$ of the $\tilde{A}_{\epsilon}$'s will have at most $2^{(m-l+1)k}$ elements. For $l=m-1$, we have $m$  such $\tilde{A}_{\epsilon}$ (i.e. the $B_j$'s) with $2^k$ elements each, while for $l=m$ there exists one such set (namely $B$) with $2^k$ elements. Note also that 
~\eqref{12178} and~\eqref{12179} imply that  
\begin{equation}
\label{12180}
x\in \tilde{C}:=B_1+\ldots+B_{m}-(m-1)B
\end{equation}
\begin{equation}
\label{12181}
t_j\in \tilde{C}_j:=B-B_{j+1}
\end{equation}
for each $1\le j\le m$, with the obvious agreement that $B_{m+1}:=B_1$. Also, for each $x=b_1+\ldots+b_m-b\in\tilde{C}$ there exists a unique $m$ tuple $\vec {t}=(t_1,\ldots,t_m)\in \prod_{j=1}^{m}\tilde{C}_j$, namely 
\begin{equation}
\label{12182}
(t_1,\ldots,t_m)=(b-b_2,b-b_3,\ldots,b-b_1),
\end{equation}
 such that~\eqref{12178} and~\eqref{12179} hold.

One can easily see that $$\tilde{C}=\{\sum_{i=1}^{k}\frac{c_i}{(2^{m+1})^i}:c_i\in\{0,1,\ldots,2^{m+1}-1\}\}$$ has  $2^{k(m+1)}$ elements, and each two of them are separated by at least  $\frac{1}{2^{k(m+1)}}$. Define now the sets
$$A_{\epsilon}=\tilde{A}_{\epsilon}+[0,\frac{1}{2^{k(m+1)}}]$$
for each $\epsilon\in V_n$ and  
$$C=\tilde{C}+[0,\frac{1}{(m+1)2^{k(m+1)}}]$$ 

Take now some $x=b_1+\ldots+b_m-b+z\in C $, with $0\le z\le \frac{1}{(m+1)2^{k(m+1)}}$. The above discussion shows that for each $t_j\in b-b_{j+1}+[0,\frac{1}{(m+1)2^{k(m+1)}}]$ we have that for each $\epsilon\in V_m$
$$x+\vec{t}\cdot\epsilon\in A_{\epsilon}.$$ Define the functions 
$$f_{k,\epsilon}=1_{A_{\epsilon}}$$ and note that 
$$\|f_{k,\epsilon}\|_{L^p(\R)}\le \left(\frac{|\tilde{A}_{\epsilon}|}{2^{k(m+1)}}\right)^{\frac{1}{p}},$$ and also that 
$$m\left\{x:\int_{-1}^{0}\ldots\int_{-1}^{0}\prod_{\epsilon\in V_m}f_{k,\epsilon}(x+\vec{t}\cdot\epsilon)d\vec{t}\ge \frac{1}{[(m+1)2^{k(m+1)}]^n}\right\}\ge \frac{2^{k(m+1)}}{(m+1)2^{k(m+1)}}=\frac{1}{m+1}.$$ The argument continues like in Theorem ~\ref{tt:1}, and hence we are guaranteed the  negative result as long as the sequence 
$$ \frac{1}{[(m+1)2^{k(m+1)}]^m}\prod_{\epsilon\in V_m}\left(\frac{2^{k(m+1)}}{|\tilde{A}_{\epsilon}|}\right)^{\frac{1}{p}}$$
diverges to $\infty$. Using the discussion on the sizes of the $\tilde{A}_{\epsilon}$'s from before, it suffices to have 
$$ \frac{1}{[(m+1)2^{k(m+1)}]^m}\left(\prod_{l=1}^{m-2}2^{lk{m\choose l}}2^{km(m+1)}\right)^{\frac{1}{p}}\to\infty.$$
This is easily seen to happen for $p<1+\frac{\sum_{l=1}^{n-2}l{m\choose l}}{m(m+1)}=\frac{2^{m-1}+1}{m+1}$. 
\end{proof}
\begin{remark}
Note that a negative result is produced as soon as one realizes that at least one of those $\tilde{A}_{\epsilon}$'s for which $\epsilon$ has at most $m-2$ nonzero entries, has cardinality considerably less than $2^{k(m+1)}$. 
\end{remark}

As in the case of the bilinear averages analyzed earlier, this type of  constructions can not prove divergence for the averages on squares ($m=2$), but do offer valuable information regarding their degenerate analog
\begin{equation}
\label{dff556} 
\frac{1}{N^2}\sum_{i=1}^{N}\sum_{j=1}^{N}f(T^{2i}x)g(T^{2j}x)h(T^{i+j}x).
\end{equation}
We note first that as a result of multilinear interpolation, the maximal operator associated with these averages maps boundedly $L^{p_1}\times L^{p_2}\times L^{p_3}$ into $L^{p_4'}$, where $\frac1{p_4'}=\frac1p_1+\frac1p_2+\frac1p_3$, whenever $p_4'>\frac12$ and $1<p_1,p_2,p_3$. Indeed, the boundedness is immediate in the case $p_1=p_2=1+\epsilon,\;p_3=\infty$, while the other five permutations can be seen similarly, after some change of variables. This is sharp as the following proposition shows.
\begin{proposition}
 The maximal operator associated with the averages \eqref{dff556} fails to map boundedly $L^{p_1}\times L^{p_2}\times L^{p_3}$ into $L^{p_4'}$, whenever $p_4'<\frac12$. 
\end{proposition}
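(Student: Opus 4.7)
The plan is to repeat the scheme of the proof of Theorem~\ref{tt:1}: produce a one-parameter family of counterexamples in the real-variable model on $\R$ and then transfer to the ergodic dynamical setting via the Appendix. After the change of variables $u = x+2s$, $v = x+2t$, the continuous average takes the midpoint form
\bas
\frac{1}{\eps^{2}}\int_{0}^{\eps}\!\!\int_{0}^{\eps} f(x+2s)\,g(x+2t)\,h(x+s+t)\, ds\, dt = \frac{1}{(2\eps)^{2}}\iint_{[x,x+2\eps]^{2}} f(u)\,g(v)\,h\bigl(\tfrac{u+v}{2}\bigr)\, du\, dv,
\eas
so the role of $h$ is merely to contain the midpoint of $u,v$. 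The starting observation is that in one dimension a set of the form $(A+A)/2$ can be only barely larger than $A$, which allows the third function to have a support much smaller than what three-factor H\"older scaling would suggest.

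For each $N\ge 1$ I set $\delta_{N} = 1/(4N^{2})$ and take
\bas
A_{N} = \bigcup_{i=0}^{N-1}\Bigl[\tfrac{i}{N}, \tfrac{i}{N} + \delta_{N}\Bigr],\qquad C_{N} = \bigcup_{k=0}^{2N-2}\Bigl[\tfrac{k}{2N}, \tfrac{k}{2N} + \delta_{N}\Bigr].
\eas
An elementary verification (in the spirit of the inclusion used for \eqref{neweqH3}) shows $(A_{N}+A_{N})/2 \subseteq C_{N}$ and $|A_{N}|,|C_{N}|\asymp N^{-1}$. Testing the maximal operator against $f_{N}=g_{N}=1_{A_{N}}$ and $h_{N}=1_{C_{N}}$ at scale $\eps = 1$: for any $x\in[-1,0]$ one has $[x,x+2]\supset[0,1]\supset A_{N}\cup C_{N}$, and then the midpoint formula together with the containment gives an averaged lower bound $\ge |A_{N}|^{2}/4 \asymp N^{-2}$. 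Thus the maximal operator applied to $(f_{N},g_{N},h_{N})$ has $L^{p_4'}$ quasi-norm $\gtrsim N^{-2}$, while $\|f_{N}\|_{p_1}\|g_{N}\|_{p_2}\|h_{N}\|_{p_3}\asymp N^{-(1/p_1+1/p_2+1/p_3)}=N^{-1/p_{4}'}$. The operator norm is therefore bounded below by a constant multiple of $N^{1/p_{4}'-2}$, which diverges precisely when $p_{4}'<1/2$.

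The one technical point that needs care is the containment $(A_{N}+A_{N})/2\subseteq C_{N}$ with the thickening: for $u\in [i/N,i/N+\delta_{N}]$ and $v\in[j/N,j/N+\delta_{N}]$ the midpoint $(u+v)/2$ lies in $[(i+j)/(2N),(i+j)/(2N)+\delta_{N}]$, i.e.\ exactly the $(i+j)$-th interval in $C_{N}$. Since $\delta_{N}<1/(2N)$ the intervals of $C_{N}$ remain disjoint and no spillover occurs, so the pointwise estimate above is immediate; any $\delta_N\le 1/(2N)$ works equally well. Once the real-variable failure is in place, transference via the Appendix (exactly as at the end of the proof of Theorem~\ref{tt:1}) yields the same failure for the averages \eqref{dff556} on any ergodic dynamical system.
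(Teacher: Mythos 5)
Your argument is correct and follows the same overall scheme as the paper (a single-scale real-variable counterexample with the scaling $N^{1/p_4'-2}$, then transference via the Appendix), but your choice of test functions is more elaborate than the paper's. The paper simply takes $f=g=h=1_A$ with $A=\{|z|\le 1/M\}$ a single short interval: since $x+s+t$ is the midpoint of $x+2s$ and $x+2t$, convexity of an interval makes the third constraint automatic, and one gets an average $\gtrsim M^{-2}$ on a set of measure $\sim 1$ against a product of norms $\sim M^{-1/p_4'}$ --- exactly the same count as yours. Your construction replaces the interval by an arithmetic progression of $N$ tiny intervals and uses that $(A_N+A_N)/2$ is only about twice as large as $A_N$; this is in the spirit of the sets $A,B,C$ in the proof of Theorem \ref{tt:1}, and it is a correct verification (the disjointness check $\delta_N<1/(2N)$ and the containment $(A_N+A_N)/2\subseteq C_N$ are both fine), but it buys nothing here because in the degenerate case the midpoint set of a single interval is already the interval itself, so no additive structure is needed. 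The only caveat is that, as in the paper, the passage from the continuous average at scale $\eps=1$ to the discrete ergodic maximal operator should go through the Riemann-sum step and the rotation system $T_kx=x+1/N_k$ before invoking the transference lemma; you assert this but it deserves the one line the paper gives it in the proof of Theorem \ref{tt:1}.
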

\begin{proof}
To see this, it suffices to repeat the argument in Theorem \ref{tt:1} for $f=g=h=1_A$ where $A:=\{z\in\R:|z|\le \frac{1}{M}\}$, for large $M$. The maximal inequality will have a constant that will go to $\infty$ as $M$ goes to $\infty$. The details are left to the reader. 
\end{proof}

The above proposition combined with the positive result in \cite{DTT} for the nondegenerate squares in the range $p_4'>\frac25$, makes the important point that the almost everywhere behavior on various $L^p$ spaces ($p<\infty$) of the averages $$ 
\frac{1}{N^2}\sum_{i=1}^{N}\sum_{j=1}^{N}f(T_1^{i}x)g(T_2^{j}x)h(T_3^{i+j}x)$$
associated with general commuting measure preserving transformations $T_1,T_2,T_3$, is very sensitive to the relation between the $T_i$'s. This is unlike the case where $f,g,h\in L^{\infty}({\bf X})$, for which the a.e. convergence holds for any (not necessarily commuting) transformations $T_1,T_2,T_3$ (see \cite{As:1}).

The main ingredient behind all the negative results proved here is the fact that the monomials $x+\sum_{j=1}^{m}a_{i,j}t_j,1\le i\le n-1$ are linearly dependent in $\R[x,t_1,\ldots,t_l]$. This allows for the functions involved to be simultaneously large, hence making the maximal operator big on a relevant set of $x$'s. Similar constructions can be made for various other averages of this type, however it is not clear whether this approach can always be applied. Here is a brief account on what the main difficulty is. Let  $r$ be the smallest number for which one can find $r$ linearly dependent monomials as above, and assume for simplicity that these correspond to $i\in\{1,\ldots,r\}$. Then we can always find at least $r-2$ monomials  among $\sum_{j=1}^{m}a_{i,j}t_j,1\le i\le r$ which are linearly independent in $\R[t_1,\ldots,t_l]$. One type of situation occurs when  only  $r-2$ such monomials exist, say the ones with $i\in\{1,\ldots,r-2\}$. By performing a suitable change of variables and by ignoring the influence of $f_i,i\ge r+1$, it suffices to show that the maximal operator
 
\begin{equation}
\sup_{\epsilon>0}\frac{1}{\epsilon^{r-2}}\int_{|t_1|,\ldots,|t_{r-2}|\le \epsilon}\prod_{i=1}^{r-2}|f_i(x+t_i)|\cdot|f_{r-1}(x+\sum_{i=1}^{r-2}b_{i}t_i)||f_{r}(x+\sum_{i=1}^{r-2}c_{i}t_i)|dt_1\ldots dt_{r-2},
\end{equation}
fails to be bounded in some appropriate range of exponents. The coefficients $b_i,c_i$ are arbitrary integers with the property that the monomial $x+\sum_{i=1}^{r-2}c_{i}t_j$ can be written a linear combination of $x+t_i$, $1\le i\le r-2$ and $x+\sum_{i=1}^{r-2}b_{i}t_i$. In this scenario, the constructions would have to be sensitive to the arithmetics of the $b_i$'s and $c_i$'s. The second type of situation occurs when we can find $r-1$ linearly independent monomials in $\R[t_1,\ldots,t_l]$ (the degenerate case). A similar reasoning shows that in this case it suffices to analyze the 
maximal operator
\begin{equation}
\sup_{\epsilon>0}\frac{1}{\epsilon^{r-1}}\int_{|t_1|,\ldots,|t_{r-1}|\le \epsilon}\prod_{i=1}^{r-1}|f_i(x+t_i)|\cdot|f_{r}(x+\sum_{i=1}^{r-1}b_{i}t_i)|dt_1\ldots dt_{r-1},
\end{equation}
where the coefficients $b_i$ are arbitrary integers with the property that the monomial $x+\sum_{i=1}^{r-2}b_{i}t_i$ can be written a linear combination of $x+t_i$, $1\le i\le r-1$. In this case, the same construction like the one described above for the averages on degenerate squares  will produce divergence results for some $f_i\in L^p,1\le i\le r$, whenever $p<\frac{r}{r-1}$.
\section{Appendix}
 For a complete nonatomic probability space $(X,\Sigma,\mu)$, denote by $C(X)$ the family of all the invertible $\mu$-measure preserving transformations $T$ of  $X$. Equip $C(X)$ with the topology of weak convergence, in which $T_{s}\to T$ if and only if $\mu(T_sA\Delta TA)\to 0$ for each $A\in \Sigma$. If a second complete nonatomic probability space $(Y,F, \nu)$ is present, we will denote by $ C(Y,X)$ the set of all invertible, bimeasurable transformations $\beta:Y\to X$ which take measure $\nu$ to measure $\mu$. The following result is due to Halmos \cite{Ha}, and is the key to the proof of the transference lemma ~\ref{ll:1}:
\begin{theorem}
\label{thm:Halmos}
Given an ergodic dynamical system ${\bf Y}=(Y,F, \nu, S)$ and a complete nonatomic probability space $(X,\Sigma , \mu)$, the set
$$\{\beta S\beta^{-1}, \beta\in  C(Y,X)\}$$
is dense in  the weak topology of $C(X)$.
\end{theorem}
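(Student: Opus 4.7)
The plan is to combine the classical fact that every $T\in C(X)$ is weakly approximated by cyclic permutations of a sufficiently fine finite equipartition with a Rokhlin tower decomposition of $(Y,S)$ that realizes the desired cyclic permutation as a conjugate of $S$. Fix $T\in C(X)$, sets $A_1,\ldots,A_k\in\Sigma$, and $\eps>0$; a basic weak neighborhood of $T$ has the form $\{T'\colon \mu(T'A_i\,\Delta\,TA_i)<\eps,\;i=1,\ldots,k\}$, so it suffices to exhibit $\beta\in C(Y,X)$ with $\beta S\beta^{-1}$ in this neighborhood.

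First I would invoke Halmos's weak approximation theorem: for every $\eta>0$ there exist an integer $N$, a partition $\{Q_1,\ldots,Q_N\}$ of $X$ into atoms of equal measure $1/N$, a cyclic permutation $\tau$ of $\{1,\ldots,N\}$ and measure preserving bijections $\tau_j\colon Q_j\to Q_{\tau(j)}$, such that the transformation $T'$ defined by $T'|_{Q_j}=\tau_j$ satisfies $\mu(T'A_i\,\Delta\,TA_i)<\eta$ for each $i$. The construction proceeds by refining the partition generated by $A_1,\ldots,A_k,TA_1,\ldots,TA_k$ into atoms of equal measure, reorganizing $T$ into a bijective matching on these atoms via a Hall--Birkhoff argument applied to the doubly stochastic matrix $(N\mu(TQ_i\cap Q_j))_{i,j}$, and finally replacing the matching by a single $N$-cycle at negligible cost $O(1/N)$ in measure. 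I choose $\eta=\eps/2$.

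Next I apply the Rokhlin--Halmos lemma to the ergodic (hence aperiodic, since $\nu$ is nonatomic) transformation $S$ with tower height $N$ and error $\eps/4$: there exists $E\subset Y$ such that $E,SE,\ldots,S^{N-1}E$ are pairwise disjoint and $\nu(Y\setminus\bigsqcup_{i=0}^{N-1}S^iE)<\eps/4$. Since $(E,\nu|_E)$ and $(Q_1,\mu|_{Q_1})$ are both nonatomic measure spaces of total mass $1/N$, I fix a measure preserving bijection $\beta_0\colon E\to Q_1$ and define $\beta$ on $S^iE$ by $\beta(y)=(T')^i\beta_0(S^{-i}y)$; on the two complementary remainders, each of measure less than $\eps/4$, I extend $\beta$ as any measure preserving bijection. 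Then $\beta\in C(Y,X)$, and by construction $\beta S\beta^{-1}$ agrees with $T'$ on $\beta(\bigsqcup_{i=0}^{N-2}S^iE)$, so the set on which the two transformations differ has measure at most $1/N+\eps/4$. Choosing $N>4/\eps$ yields $\mu(\beta S\beta^{-1}A_i\,\Delta\,T'A_i)<\eps/2$ for each $i$, and combining with the first step gives $\mu(\beta S\beta^{-1}A_i\,\Delta\,TA_i)<\eps$, as required.

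The principal obstacle is the Halmos weak approximation invoked in the second paragraph: verifying that every measure preserving transformation is weakly close to a cyclic permutation of a sufficiently fine equipartition requires some nontrivial combinatorial work, namely the decomposition of $T$ across a refined atomic partition, a Hall-type matching argument on the resulting doubly stochastic table, and the observation that replacing an arbitrary permutation of $N$ atoms by a single $N$-cycle alters at most $O(N)$ atoms and hence a set of measure $O(1/N)$. Once this classical density statement is in hand, the Rokhlin tower construction and the definition of $\beta$ are routine bookkeeping.
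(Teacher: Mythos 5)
The paper offers no proof of this statement: Theorem \ref{thm:Halmos} is imported verbatim from Halmos's \emph{Lectures in Ergodic Theory} and used as a black box in the proof of the transference lemma, so there is no argument of the author's to compare yours against. Judged on its own terms, your proof is the standard one for Halmos's conjugacy lemma and its architecture is correct: reduce to a basic weak neighborhood determined by $A_1,\dots,A_k$ and $\eps$; approximate $T$ weakly by a cyclic permutation $T'$ of an equipartition $\{Q_1,\dots,Q_N\}$; build a Rokhlin tower of height $N$ for $S$ (ergodicity on a nonatomic space does give aperiodicity, as you implicitly use); and copy the tower onto the $T'$-orbit of $Q_1$ to define $\beta$. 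The tower step is carried out correctly; the only blemishes there are cosmetic. Rokhlin gives $\nu(E)\le 1/N$ rather than $=1/N$, so $\beta_0$ should map $E$ onto a subset of $Q_1$ of measure $\nu(E)$ with the slack absorbed into the remainder, and the bound $\mu(\beta S\beta^{-1}A_i\,\Delta\,T'A_i)\le 2(1/N+\eps/4)$ carries a factor $2$ you dropped; both are fixed by shrinking the Rokhlin error and enlarging $N$.

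The one place where the sketch is genuinely too quick is the reduction to a single $N$-cycle inside the weak approximation theorem. A permutation $\pi$ extracted by Hall--Birkhoff from the matrix $(N\mu(TQ_i\cap Q_j))$ can have on the order of $N$ cycles (take $T=\id$: the matrix is the identity and $\pi=\id$), and turning a permutation with $c$ cycles into a single cycle forces changes at roughly $c$ indices, so for the $\pi$ you start with the cost is not $O(1/N)$. What saves the argument is that the cost that matters is not the number of indices where the final $\sigma$ differs from $\pi$, but the number of indices where $\sigma$ violates the block constraint $\sigma(I_j)=J_j$, where $I_j$ (resp.\ $J_j$) indexes the $Q_i$'s lying in the $j$-th atom of the partition generated by the $A_i$ (resp.\ by the $TA_i$): any block-respecting $\sigma$ yields the same sets $T'A_i$ up to the approximation error, regardless of how it differs from $\pi$ pointwise. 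Among block-respecting permutations one can choose, by an Eulerian-circuit argument on the multigraph whose vertices are the blocks and whose edges are the indices $i$, one whose number of cycles equals the number $d$ of connected components of that multigraph; $d$ is bounded by the number of blocks, hence independent of $N$, and merging $d$ cycles into one violates the constraint at only $O(d)$ indices. That is where the $O(1/N)$ actually comes from. With this correction your argument is complete and is precisely Halmos's original proof.
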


\begin{lemma}[Transference Principle]
\label{ll:1}
For a given  dynamical system ${\bf X_T}=(X,\Sigma, \mu, T)$, $p\ge 1$ and positive constants $a,b,c,\lambda$, define
$$\gamma({\bf X_T},a,b,c,\lambda)=\sup_{\|f\|_{L^p(\bf X)}=a\atop{{\|g\|_{L^p(\bf X)}=b\atop{\|h\|_{L^p(\bf X)}=c}}}}\mu\left\{x\in X:\sup_{N\ge 1}\frac{1}{N}\sum_{n=1}^{N}f(T^nx)g(T^{2n}x)h(T^{3n}x)>\lambda\right\}.$$ Then for any ergodic dynamical system  ${\bf Y_S}=(Y,\mathcal F, \nu, S)$  we have 
$$\gamma({\bf X_T},a,b,c,\lambda)\le \gamma({\bf Y_S},a,b,c,\lambda).$$
\end{lemma}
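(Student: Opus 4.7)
The plan is to combine Halmos's density theorem with two layers of approximation (finite truncation of the supremum and replacement of $f,g,h$ by simple functions) in order to lift a near-extremal configuration on $(X,T)$ to a configuration on $(Y,S)$ that achieves essentially the same maximal event measure.

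Fix $\epsilon>0$ and choose $f,g,h\in L^p({\bf X})$ of norms $a,b,c$ so that
$$E^T:=\left\{x:\sup_{N\ge 1}\frac{1}{N}\sum_{n=1}^Nf(T^nx)g(T^{2n}x)h(T^{3n}x)>\lambda\right\}$$
has $\mu(E^T)\ge\gamma({\bf X_T},a,b,c,\lambda)-\epsilon$. Since $E^T$ is the increasing union over $N_0$ of $E^T_{N_0}:=\{\sup_{N\le N_0}>\lambda\}$, fix $N_0$ with $\mu(E^T_{N_0})\ge\mu(E^T)-\epsilon$. Similarly, by writing $E^T_{N_0}=\bigcup_{k}\{\sup_{N\le N_0}>\lambda+1/k\}$, fix $\delta>0$ for which the event at threshold $\lambda+\delta$ still has measure $\ge\mu(E^T)-2\epsilon$. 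This $\delta$-slack is the mechanism that absorbs the inherent discontinuity of superlevel sets.

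Next, approximate $f,g,h$ in $L^p$ by bounded simple functions $\tilde f,\tilde g,\tilde h$ (finite linear combinations of indicators), rescaled so that the norms remain exactly $a,b,c$. Hölder's inequality controls the $L^{p/3}$ difference of each of the $N_0$ averages by the $L^p$-approximation errors; a Chebyshev bound plus a union bound over $N\le N_0$ shows that the truncated maximal function for $(\tilde f,\tilde g,\tilde h)$ differs from that for $(f,g,h)$ by more than $\delta$ only on a set of measure $\le\epsilon$. Consequently $\{\sup_{N\le N_0}\text{[simple avg]}>\lambda\}$ has measure $\ge\mu(E^T)-3\epsilon$.

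Now invoke Theorem \ref{thm:Halmos} to produce $\beta\in C(Y,X)$ with $T':=\beta S\beta^{-1}$ in an arbitrarily small weak neighborhood of $T$. Iterating $T$ a bounded number of times is continuous in the weak topology, since
$$\mu(T_s^{n+1}A\,\triangle\,T^{n+1}A)\le\mu(T_s(T_s^nA\triangle T^nA))+\mu(T_sT^nA\triangle T\cdot T^nA),$$
both terms of which tend to $0$ inductively. Applying this to the $n\le 3N_0$ iterates of the finitely many level sets $A$ of $\tilde f,\tilde g,\tilde h$, we make the truncated maximal function for $T'$ (with the simple functions) within $\delta\epsilon$ of the one for $T$ in $L^1(X)$; by Chebyshev its superlevel set at threshold $\lambda$ has measure $\ge\mu(E^T)-4\epsilon$. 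Finally, set $\tilde f^Y=\tilde f\circ\beta$ and analogously for $\tilde g^Y,\tilde h^Y$: these have $L^p(Y)$-norms $a,b,c$ since $\beta$ preserves measure, and $(T')^n=\beta S^n\beta^{-1}$ gives $\tilde f((T')^nx)=\tilde f^Y(S^n\beta^{-1}x)$, so the event for $T'$ on $X$ pulls back under $\beta^{-1}$ to the corresponding event for $S$ on $Y$ with the same measure. Thus $\gamma({\bf Y_S},a,b,c,\lambda)\ge\mu(E^T)-4\epsilon\ge\gamma({\bf X_T},a,b,c,\lambda)-5\epsilon$, and $\epsilon\to 0$ gives the conclusion.

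The main obstacle is that the measure of a superlevel set of a supremum is neither upper nor lower semicontinuous in the underlying transformation or in the underlying functions; each of the three approximations (truncating $N$, smoothing $f,g,h$, conjugating $T$ to $\beta S\beta^{-1}$) can in principle lose the event completely at a fixed threshold. This is exactly what the $\delta$-gap in the threshold buys: the strict inequality $>\lambda$ in the definition of $\gamma$ lets us budget a vanishing amount of threshold room at each approximation step, making the argument go through.
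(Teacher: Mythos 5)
Your proof is correct in substance, but it reaches the conclusion by a genuinely different mechanism than the paper. Both arguments share the same skeleton: Halmos's theorem supplies conjugates $\beta S\beta^{-1}$ weakly dense near $T$, and the identity $(\beta S\beta^{-1})^n=\beta S^n\beta^{-1}$ shows each conjugate satisfies $\gamma({\bf X_{\beta S\beta^{-1}}},a,b,c,\lambda)\le\gamma({\bf Y_S},a,b,c,\lambda)$. Where you diverge is in how the estimate is transferred from the conjugates to the limit $T$. The paper proves that the sublevel set $S_r=\{\sigma:\gamma({\bf X_\sigma},a,b,c,\lambda)\le r\}$ is closed in the weak topology: it extracts from a weakly convergent net a subnet along which $f\circ\sigma_l\to f\circ\sigma$ almost everywhere and runs a Fatou/$\liminf$ argument on the superlevel sets. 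You instead make everything finite and quantitative: truncate the supremum to $N\le N_0$, replace $f,g,h$ by simple functions (controlling the error via generalized H\"older, Chebyshev, and a union bound over $N\le N_0$), and exploit the elementary fact that $T\mapsto T^{\pm n}A$ is weakly continuous for fixed $n$ and $A$, applied to the finitely many level sets involved. Your route is longer but arguably more robust: it sidesteps the subnet extraction and the point, glossed over in the paper, that a.e. convergence of $f\circ\sigma_l$ must be upgraded to a.e. convergence of $f\circ\sigma_l^n$ for all $n$ simultaneously; the simple-function reduction makes that issue disappear. The one place you should tighten the writing is the threshold bookkeeping: as written you spend the entire gap $\delta$ in the simple-function approximation step (dropping from threshold $\lambda+\delta$ to $\lambda$) and then have no room left to absorb the $T\to T'$ perturbation, so the claim that the $T'$-event at threshold $\lambda$ still has measure $\ge\mu(E^T)-4\epsilon$ does not follow as stated. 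The fix is exactly the budgeting you describe in your closing paragraph --- use $\delta/2$ at each of the two approximation stages --- so this is a presentational slip rather than a gap.
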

The first goal here is to show that for each $0\le r\le 1$,
 $$S_{r}=\{\sigma\in  C(X):\gamma({\bf X_{\sigma}},a,b,c,\lambda)\le r\}$$ is closed in the weak topology of $ C(X)$. Let $\sigma_s$ be a net in $ C(X)$ converging weakly to some $\sigma$, such that $\gamma({\bf X_{\sigma_s}},a,b,c,\lambda)\le r$. Take $f,g,h\in L^p({\bf X_T})$ with $\|f\|_{L^p(\bf X_T)}=a$, $\|g\|_{L^p(\bf X_T)}=b$ and  $\|h\|_{L^p(\bf X_T)}=c$ . From the definition of the weak topology, $f\circ\sigma_s\to  f\circ\sigma $ in the norm of $L^p({\bf X_T})$ and similarly for $g$ and $h$. There exists a subnet indexed by $\Z_{+}$, which we will denote by $(\sigma_l)$, such that $f\circ\sigma_l\to  f\circ\sigma $, $g\circ\sigma_l\to  g\circ\sigma$ and $h\circ\sigma_l\to  h\circ\sigma $, in the $L^p$-norm. Choose a subset $X_0\subset X$ of full measure such that $\lim_{l\to \infty}f(\sigma_l x)=f(\sigma x)$ and simultaneously for $g$ and $h$, for each $x\in X_0$. This is easily seen to imply that $$\sup_{N\ge 1}\frac{1}{N}\sum_{n=1}^{N}f(\sigma^nx)g(\sigma^{2n}x)h(\sigma^{3n}x)\le \liminf_{l\to\infty}\sup_{N\ge 1}\frac{1}{N}\sum_{n=1}^{N}f(\sigma_l^nx)g(\sigma_l^{2n}x)h(\sigma_l^{3n}x)$$ for each $x\in X_0$. So 
\begin{align*}
\mu\{x\in X:\sup_{N\ge 1}\frac{1}{N}&\sum_{n=1}^{N}f(\sigma^nx)g(\sigma^{2n}x)h(\sigma^{3n}x)>\lambda\}\\&\le \mu\{x\in X:\liminf_{l\to\infty}\sup_{N\ge 1}\frac{1}{N}\sum_{n=1}^{N}f(\sigma_l^nx)g(\sigma_l^{2n}x)h(\sigma_l^{3n}x)>\lambda\}\\&= \lim_{m\to\infty}\mu\{x\in X:\inf_{l\ge m}\sup_{N\ge 1}\frac{1}{N}\sum_{n=1}^{N}f(\sigma_l^nx)g(\sigma_l^{2n}x)h(\sigma_l^{3n}x)>\lambda \}\\&\le \lim_{m\to\infty}\mu\{x\in X:\sup_{N\ge 1}\frac{1}{N}\sum_{n=1}^{N}f(\sigma_m^nx)g(\sigma_m^{2n}x)h(\sigma_m^{3n}x)>\lambda\}\\&\le r.
\end{align*}
The fact that  $S_r$ is closed  follows immediately. 

For the last part of the proof, consider an arbitrary system  ${\bf X_T}=(X,\Sigma, \mu, T)$. Take  arbitrary functions $f,g,h\in L^p({\bf X_T})$ with $L^p$ norms equal to $a,b$ and $c$, respectively. Theorem ~\ref{thm:Halmos} guarantees the existence of a net of transformations $(\beta_s)\subset C(Y,X)$ such that $\beta_s S\beta_s^{-1}\to T$ in the weak topology. It is an easy verification that 
\begin{align*}
\sup_{N\ge 1}\frac{1}{N}&\sum_{n=1}^{N}f\circ\beta_s(S^n\beta_s^{-1}x)g\circ\beta_{s}(S^{2n}\beta_s^{-1}x)h\circ\beta_s(S^{3n}\beta_s^{-1}x)\\&=\sup_{N\ge 1}\frac{1}{N}\sum_{n=1}^{N}f((\beta_sS\beta_s^{-1})^nx)g((\beta_sS\beta_s^{-1})^{2n}x)h((\beta_sS\beta_s^{-1})^{3n}x)
\end{align*}
 for $\mu$ a.e. $x$. The functions $f\circ\beta_s$,  $g\circ\beta_s$ and $h\circ\beta_s$  have  the same norms $a,$ $b$ and $c$ respectively in $L^{p}({\bf Y_S})$ , hence
$$\mu\{x\in X:\sup_{N\ge 1}\frac{1}{N}\sum_{n=1}^{N}f((\beta_sS\beta_s^{-1})^nx)g((\beta_sS\beta_s^{-1})^{2n}x)h((\beta_sS\beta_s^{-1})^{3n}x)>\lambda\}\le\gamma({\bf Y_S},a,b,c,\lambda). $$ Since $f,g,h$ were arbitrary, we get that for each $s$, 
$$\gamma({\bf Y_{\beta_sS\beta_s^{-1}}},a,b,c,\lambda)\le\gamma({\bf Y_S},a,b,c,\lambda)$$
The fact that $S_{\gamma({\bf Y_S},a,b,c,\lambda)}$ is closed in $C(X)$ finishes the proof.

\begin{remark}
The obvious analog of the above Transference Priciple also holds, with no essential modifications in the proof, for each of the averages in \eqref{eq:averg}.
\end{remark}


\begin{thebibliography}{99}
\bibitem{As:1} I. Assani. Pointwise convergence of ergodic averages along
cubes, preprint.
\bibitem{As:2} I. Assani.  Multiple recurrence and almost sure convergence for weakly mixing dynamical systems. {\em Israel J. Math.}  \textbf{103}  (1998), 111-124.
\bibitem{Bo2} J. Bourgain.  Double recurrence and almost sure convergence. {\em J. Reine Angew. Math.} \textbf{404} (1990), 140-161.
\bibitem{CH}  M. Christ.  On certain elementary trilinear operators. {\em  Math. Res. Lett.}  \textbf{8}  (2001),  no. 1-2, 43-56.
\bibitem{DTT}C. Demeter, T. Tao and C. Thiele.  Maximal multilinear operators, accepted for publication in {\em Trans. Amer. Math. Soc.}. Preprint available at arxiv.org/pdf/math.CA/0510581
\bibitem{Fu} H. Furstenberg. Ergodic behavior of diagonal measures and
a theorem of Szemerdi on arithmetic progressions. {\em  J. Analyze Math.} {\bf 31} (1977), 204-256.
\bibitem{Go} W. T. Gowers.   A new proof of Szemer\'edi's theorem. {\em Geom. Funct. Anal.} {\bf 11}  (2001),  no. 3, 465-588.
\bibitem{Ha}P. Halmos.  Lectures in Ergodic Theory. Chelsea Publishing Co., New York, 1956.
\bibitem{KH} B. Host and B. Kra. Nonconventional ergodic averages and
nilmanifolds. {\em Ann. of Math. (2)} {\bf 161}  (2005),  no. 1, 397-488.
\bibitem{LT1} M. Lacey and C. Thiele. $L\sp p$ estimates on the bilinear Hilbert transform for $2<p<\infty$.  {\em Ann. of Math.} (2)  {\bf 146}  (1997),  no. 3, 693-724
\bibitem{LT2} M. Lacey and C. Thiele. On Calderón's conjecture. {\em Ann. of Math.} (2) {\bf 149}  (1999),  no. 2, 475-496.
\bibitem{L}M. Lacey. The bilinear maximal functions map into $L\sp p$ for $2/3<p\leq1$. {\em Ann. of Math. (2)} {\bf 151}  (2000),  no. 1, 35-57.
\bibitem{Le:1} E. Lesigne. Sur la convergence ponctuelle de certaines moyennes ergodiques. {\em  C. R. Acad. Sci. Paris S\'er. I Math.}  \textbf{298}  (1984),  no. 17, 425-428.
\bibitem{Z} T. Ziegler, {\em Universal characteristic factors and Furstenberg averages}, J. Amer. Math. Soc.  \textbf{2}0  (2007),  no. 1, 53-97.
\end{thebibliography}
\end{document}